\topskip \setlength{\parskip}{0pt plus 0pt minus 0pt}
\numberwithin{equation}{section}
\newtheorem{theorem}{Theorem}[section]
\newtheorem{conjecture}[theorem]{Conjecture}
\newtheorem{lemma}[theorem]{Lemma}
\begin{document}
\parskip 7pt

\pagenumbering{arabic}
\def\sof{\hfill\rule{2mm}{2mm}}
\def\ls{\leq}
\def\gs{\geq}
\def\SS{\mathcal S}
\def\qq{{\bold q}}
\def\MM{\mathcal M}
\def\TT{\mathcal T}
\def\EE{\mathcal E}
\def\lsp{\mbox{lsp}}
\def\rsp{\mbox{rsp}}
\def\pf{\noindent {\it Proof.} }
\def\mp{\mbox{pyramid}}
\def\mb{\mbox{block}}
\def\mc{\mbox{cross}}
\def\qed{\hfill \rule{4pt}{7pt}}
\def\pf{\noindent {\it Proof.} }
\textheight=22cm

{\Large
\begin{center}
A proof of a conjecture of Mao on
 Beck's partition  statistics modulo 8
\end{center}
}

\begin{center}

$^{1}$Renrong Mao  and $^{2}$Ernest X.W. Xia

Department of Mathematics,\\
 Soochow University, \\
 Suzhou, 215006,
People's Republic of China\\[6pt]

$^2$School of Mathematical Sciences, \\
  Suzhou University of Science and
Technology, \\
 Suzhou,  215009, Jiangsu Province,
 P. R. China

Email: rrmao@suda.edu.cn,
  ernestxwxia@163.com

\end{center}

\noindent {\bf Abstract.}
  Beck introduced
two partition  statistics
   $NT(r,m,n)$ and $M_{\omega}(r,m,n)$,
    which denote
     the total number of parts
      in the partition
      of $n$ with rank congruent
       to $r$ modulo $m$ and
    the total number of ones
      in
    the partition
      of $n$ with crank congruent
       to $r$ modulo $m$, respectively. In recent
        years, a number of congruences
         and identities on $NT(r,m,n)$ and $M_{\omega}(r,m,n)$
         for some small $m
          $ have been established.
           In this paper, we prove an identity
            on $NT(r,8,n)$ and
            $M_{\omega}(r,4,n)$
             which confirm a conjecture
              given by Mao.

\noindent {\bf Keywords:}
 Beck's partition  statistics,
 rank, crank, partition.

\noindent {\bf AMS Subject
Classification:} 11P81, 05A17

 \allowdisplaybreaks

\section{Introduction}

 A partition
 $\pi=(\pi_1,\pi_2,\ldots,\pi_k)$
   of a positive integer
 $n$ is a   sequence
  of positive integers $\pi_1\geq
   \pi_2\geq \cdots
  \geq \pi_k>0$ such that $\pi_1
  +\pi_2+\cdots
  +\pi_k=n$. The $\pi_i$
   are called the parts of the partition
    \cite{Andrews-1976}.  In this paper,
 we shall write  $\pi \vdash n$
    if  $\pi$  is  a partition of  $n$.
 Let  $\#(\pi)$ and $\lambda(\pi)$
 denote the total number of parts of  $\pi$
  and the largest part of
$\pi$, respectively.
 As usual,
   let $p(n)$ denote the number of
    partitions of $n$ and set $p(0)=1$.
 The following three famous congruences for
 $p(n)$ were discovered by Ramanujan \cite{Ramanujan-1}:
 \begin{align*}
     p(5n + 4) &\equiv  0 \pmod 5,   \\
     p(7n + 5) & \equiv 0
      \pmod 7,    \\
      p(11n + 6) & \equiv 0
       \pmod {11}.
    \end{align*}

In order to explain the above
 three   congruences
combinatorially,
 two   partition statistics,
    rank and crank, were defined  by
 Dyson \cite{Dyson}, and Andrews and Garvan
 \cite{Andrews-2}, respectively.
 In 1944,  Dyson \cite{Dyson} defined
 the rank of a partition to be the largest part
 minus the number of parts, i.e.,
\[
{\rm rank}(\pi):=\lambda(\pi)-\#(\pi).
\]
For example,
 the rank of the partition
 $2+1+1+1$ is $2-4=-2$.
 In 1988, Andrews and Garvan
   \cite{Andrews-2}
defined the crank by
\begin{align*}
\operatorname{crank}(\pi):=\left\{\begin{array}{ll}
\lambda(\pi), & \text { if } \omega(\pi)=0, \\
\mu(\pi)-\omega(\pi), & \text { if } \omega(\pi)>0,
\end{array}\right.
\end{align*}
where  $\omega(\pi)$ counts the number
 of ones in $\pi$ and $\mu(\pi)$ counts the number
 of parts larger than $\omega(\pi)$.
  For example, the crank of the partition
   $2+1+1+1$ is $0-3=-3$
    while the crank of the partition
   $4+2+2$ is $4$.

 Recently, Andrews
 \cite{Andrews} mentioned that George
  Beck defined
  two partition  statistics
   $NT(r,m,n)$ and $M_{\omega}(r,m,n)$,
    which count
     the total number of parts in the partition
      of $n$ with rank congruent
       to $r$ modulo $m$ and
    the total number of ones
      in
    the partition
      of $n$ with crank congruent
       to $r$ modulo $m$, respectively.
Utilizing
   the results on rank differences
obtained in \cite{Atkin},
  Andrews  \cite{Andrews}   proved the following
 interesting congruences  conjectured
  by Beck:
\[
\sum_{m=1}^4
 m NT(m,5,5n+1) \equiv \sum_{m=1}^4
 m NT(m,5,5n+4) \equiv
0 \pmod 5
\]
and for $i\in\{1,5\}$,
\begin{align*}
 NT(1,7,7n+i)&- NT(6,7,7n+i)
 + NT(2,7,7n+i)- NT(5,7,7n+i)\nonumber\\[6pt]
 &- NT(3,7,7n+i)+ NT(6,7,7n+i) \equiv
0 \pmod 7.
\end{align*}
Motivated  by Andrews' work,
 a number of  identities and
   congruences on $NT(r,m,n)$
    and $M_{\omega}(r,m,n)$ and their
     variations have been
     proved; see for example
     \cite{Chan,Chern-1,Chern-2,Chern-3,Du,Du-1,Liuxin-jin,Lin,mao-1,mao-2,mao-3,Mao-Xia,Xuan,Yao}.
      Very recently, Mao  \cite{mao-1}
       proved some
      identities
        on   the total
number of parts functions associated to ranks of overpartition.
 At the end of his paper \cite{mao-1}, Mao
 conjectured five identities
 on $NT(r,m,n)$ and $M_{\omega}(r,m,n)$
  and three of them were proved by
  Jin, Liu and Xia \cite{Liuxin-jin}, and Mao
   and Xia \cite{Mao-Xia}. The rest two conjectural
    identities are listed as follows.

\begin{conjecture}\label{c8}
    For $n\geq 0$,
    \begin{align*}
    NT(2,8,4n)-NT(6,8,4n) &= M_{\omega}(1,4,4n)-M_{\omega}(3,4,4n),\\
        NT(6,8,4n+2)- NT(2,8,4n+2)& = M_{\omega}(1,4,4n+2)-M_{\omega}(3,4,4n+2).
  \end{align*}

\end{conjecture}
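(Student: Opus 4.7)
My plan is to translate each side of the conjecture into a closed-form $q$-series via generating functions and then verify the resulting identity by a $4$-dissection. Set $\omega_{8}=e^{i\pi/4}$ and introduce the weighted generating functions
$$\tilde F(z,q):=\sum_\pi \#(\pi)\, z^{\operatorname{rank}(\pi)}q^{|\pi|},\qquad \tilde G(z,q):=\sum_\pi \omega(\pi)\, z^{\operatorname{crank}(\pi)}q^{|\pi|}.$$
A direct application of the roots-of-unity filter---using $\{2,6\}\subset\mathbb{Z}/8\mathbb{Z}$ on the rank side and $\{1,3\}\subset\mathbb{Z}/4\mathbb{Z}$ on the crank side---yields
$$\sum_{n\geq 0}[NT(2,8,n)-NT(6,8,n)]\,q^n=\tfrac{1}{2}\operatorname{Im}\!\left[\tilde F(\omega_{8},q)-\tilde F(\omega_{8}^{3},q)\right]$$
and
$$\sum_{n\geq 0}[M_{\omega}(1,4,n)-M_{\omega}(3,4,n)]\,q^n=\operatorname{Im}\tilde G(i,q),$$
where ``$\operatorname{Im}$'' is taken coefficient by coefficient. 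The two identities of the conjecture then amount to matching the $q^{4n}$-component (without sign change) and the $q^{4n+2}$-component (with a sign flip) of these two series.

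The main work is to obtain closed-form expressions for $\tilde F(\omega_{8},q)$, $\tilde F(\omega_{8}^{3},q)$ and $\tilde G(i,q)$. For the rank side I would differentiate the Durfee-square two-variable identity
$$\sum_\pi y^{\#(\pi)}z^{\operatorname{rank}(\pi)}q^{|\pi|}=\sum_{n\geq 0}\frac{y^{n}q^{n^{2}}}{(zq;q)_n\,(yq/z;q)_n}$$
with respect to $y$, set $y=1$, and then specialize $z\in\{\omega_{8},\omega_{8}^{3}\}$. For the crank side I would start from the infinite product formula $\sum_\pi z^{\operatorname{crank}(\pi)}q^{|\pi|}=(q;q)_\infty/\bigl((zq;q)_\infty(z^{-1}q;q)_\infty\bigr)$ together with a refinement that marks $\omega(\pi)$, differentiate, and specialize at $z=i$. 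Bailey-pair and Watson-transformation manipulations of the type used throughout the cited works of Mao, Mao--Xia and Jin--Liu--Xia should then express all three series as combinations of Ramanujan's theta functions $\varphi(q),\psi(q)$, standard eta-quotients, and a Lambert-type correction term arising from the $y$-derivative.

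With closed forms in hand, I would apply the classical $4$-dissections of $\varphi(q),\psi(q)$ and $(q;q)_\infty$ to separate the $q^{4n}$ and $q^{4n+2}$ parts of each side. The conjecture should then reduce to a pair of explicit eta-quotient identities in $q^{4},q^{8},q^{16}$, verifiable by the Jacobi triple product and standard theta-function identities. The \textbf{main obstacle} is the closed-form step on the rank side: the passage to primitive \emph{eighth} (rather than fourth) roots of unity brings in genuinely mod-$8$ objects, and the Lambert-series correction coming from differentiating in $y$ is delicate to handle. Once both sides are in theta-function form, the final matching on each arithmetic progression is a routine---if lengthy---computation.
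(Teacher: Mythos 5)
Your setup is sound and your overall strategy coincides in outline with what the paper does: both sides are converted to $q$-series via roots-of-unity filters, expressed through theta functions and Lambert-type series, and then dissected. Your filter identities are correct (the rank difference is $\tfrac12\operatorname{Im}[\tilde F(\omega_8,q)-\tilde F(\omega_8^3,q)]$ and the crank difference is $\operatorname{Im}\tilde G(i,q)$), and the crank side does work out as you predict: the paper's Lemma \ref{L-1} starts from exactly such a weighted crank formula and lands on the clean closed form $\tfrac14\,J_1J_2J_4^{-1}\,(1-\varphi(q)^2)$, after which the $4$-dissection uses Lewis's $8$-dissection of $J_1$ and standard $\varphi,\psi$ dissections. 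You also correctly identify the rank side as the bottleneck. The paper, incidentally, proves the stronger unified statement $NT(2,8,2n)-NT(6,8,2n)=(-1)^n(M_\omega(1,4,2n)-M_\omega(3,4,2n))$ rather than treating the two progressions separately.

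There are, however, two genuine gaps where your plan is too optimistic. First, on the rank side the needed input is not a Bailey-pair or Watson-transformation manipulation but a dissection of quadratic-denominator Lambert series: differentiating in $y$ produces sums of the shape $\sum_{n\neq 0}(-1)^nq^{n(3n+1)/2+an}(1-q^n)/(1-q^{8n})^2$, and extracting residue classes from these requires the generalized Lambert series identities of Chan (the paper's \eqref{chan0} and \eqref{chan1}), applied eight separate times with carefully chosen parameters to produce the identities \eqref{41}--\eqref{41010}. This is the technical heart of the proof and is not supplied, even in outline, by your proposal; without a tool of this type the ``closed-form step on the rank side'' that you flag does not go through. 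Second, the final matching is not ``verifiable by the Jacobi triple product and standard theta-function identities'': after both sides are in theta form, the paper is left with identities such as \eqref{rf11} involving eta-quotients of level $64$ (with $J_{64}^{48}$ in denominators), which it verifies by checking that each term is a modular function on $\Gamma_1(64)$ and invoking the valence-type machinery implemented in the \emph{thetaids} package. So the endgame requires either a comparable modular-forms argument or an equally heavy hand computation, not a routine theta manipulation.
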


The aim of this paper is to present a proof
 of the following theorem
  which implies
Conjecture \ref{c8}.

\begin{theorem}\label{thmain}
    For $n\geq 0$,
    \begin{align*}
       NT(2,8,2n)- NT(6,8,2n)&
       =(-1)^n\left(
       M_{\omega}(1,4,2n)
       -M_{\omega}(3,4,2n)\right).
    \end{align*}
\end{theorem}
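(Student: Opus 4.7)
The plan is to derive generating-function expressions for both sides of Theorem~\ref{thmain} and reduce the identity to a theta-function equality. Set
\begin{align*}
F(q) &= \sum_{n\geq 0}\bigl(NT(2,8,n)-NT(6,8,n)\bigr)q^n,\\
G(q) &= \sum_{n\geq 0}\bigl(M_{\omega}(1,4,n)-M_{\omega}(3,4,n)\bigr)q^n.
\end{align*}
Since $(-1)^n q^{2n} = (iq)^{2n}$, extracting even indices on both sides of the theorem converts the statement into the single generating-function identity
$$F(q)+F(-q) \;=\; G(iq)+G(-iq).$$

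The next step is a roots-of-unity sieve. Introduce $\Phi(z,q)=\sum_{\pi}\#(\pi)\,z^{\mathrm{rank}(\pi)}q^{|\pi|}$, which arises as the $t$-derivative at $t=1$ of the two-variable rank generating function, and $\Psi(z,q)=\sum_{\pi}\omega(\pi)\,z^{\mathrm{crank}(\pi)}q^{|\pi|}$. Writing $\zeta_8=e^{2\pi i/8}$, a direct computation of the characters $\zeta_8^{-2j}-\zeta_8^{-6j}$ and $i^{-j}-i^{-3j}$ yields
\begin{align*}
F(q) &= \tfrac{i}{4}\bigl(-\Phi(\zeta_8,q)+\Phi(\zeta_8^{3},q)-\Phi(\zeta_8^{5},q)+\Phi(\zeta_8^{7},q)\bigr),\\
G(q) &= \tfrac{i}{2}\bigl(\Psi(-i,q)-\Psi(i,q)\bigr).
\end{align*}
I would evaluate $\Phi$ at the primitive $8$th roots of unity by combining a Hecke-type representation of the rank generating function with rank-dissection identities in the spirit of Atkin--Swinnerton-Dyer and then differentiating in the parts variable; and I would evaluate $\Psi(\pm i,q)$ by decomposing partitions according to the number $k$ of ones (so that $\mathrm{crank}(\pi)=\mu(\pi)-k$ when $k\geq 1$ and $\mathrm{crank}(\pi)=\lambda(\pi)$ when $k=0$) and summing the resulting Appell--Lerch-type series.

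The final step is to perform the substitutions $q\mapsto -q$ and $q\mapsto \pm iq$ so that both $F(q)+F(-q)$ and $G(iq)+G(-iq)$ become combinations of theta functions in $q^2$ together with Lambert-type pieces, and then to verify the resulting theta identity using Jacobi's triple product, the quintuple product, and the standard $2$-dissections of $(q;q)_\infty$ and $(q^2;q^2)_\infty$. I expect the principal obstacle to be the reduction on the crank side: because the crank is defined differently on partitions with and without $1$s, $\Psi(z,q)$ lacks a clean infinite-product form and instead yields an expression summing over the number of ones, so matching $G(iq)+G(-iq)$ to the rank-side expression will likely require a delicate partial-fraction decomposition in $z$ followed by a Hecke--Rogers type identity, following the broad strategy used by Jin--Liu--Xia~\cite{Liuxin-jin} and Mao--Xia~\cite{Mao-Xia} in resolving the three companion conjectures from~\cite{mao-1}.
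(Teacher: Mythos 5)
Your setup is sound and matches the architecture of the paper's proof: the equivalence of Theorem \ref{thmain} with $F(q)+F(-q)=G(iq)+G(-iq)$ is correct (it is exactly the even-index extraction the paper performs), and your roots-of-unity expressions for $F$ and $G$ in terms of $\Phi(\zeta_8^j,q)$ and $\Psi(\pm i,q)$ agree with the formulas the paper imports from Mao--Xia and from Lemma 2.1 of \cite{mao-2}. The problem is that everything after that point is a statement of intent rather than an argument, and the two steps you defer are precisely where all of the work lives.

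First, you have located the difficulty on the wrong side. The crank side is not the obstacle: Mao--Xia's formula reduces $G(q)$ in a few lines to the closed form $\frac{J_1J_2}{4J_4}\left(1-\varphi(q)^2\right)$ (equation \eqref{v-13}), which then dissects routinely using known $4$-dissections of $J_1$, $J_2$ and $\varphi(q)^2$; no ``delicate partial-fraction decomposition in $z$'' is needed. The genuinely hard part is the rank side: evaluating $\Phi$ at primitive $8$th roots of unity amounts to $16$-dissecting Appell--Lerch sums of the shape $\sum(-1)^nq^{n(3n+1)/2+an}/(1\pm q^{4n})^2$, which in the paper requires Chan's generalized Lambert series identities \eqref{chan0}--\eqref{chan1} together with eight separate dissection formulas \eqref{41}--\eqref{41010}; invoking ``a Hecke-type representation \dots\ with rank-dissection identities in the spirit of Atkin--Swinnerton-Dyer'' does not supply any of this. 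Second, the terminal identity is not one that Jacobi's triple product, the quintuple product, and standard $2$-dissections will dispatch: after the dissections one is left with identities such as \eqref{rf11}, a sum of five large eta-quotients which the authors can only verify by showing each term is a modular function on $\Gamma_1(64)$ and appealing to the \emph{thetaids} package (i.e., a finite Sturm-type check). Without carrying out the Lambert-series dissections and without a concrete mechanism for certifying the final identity, the proposal is a plausible roadmap but not a proof.
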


\section{The Generating
 Function  for $ M_{\omega}(1,4,2n)
       -M_{\omega}(3,4,2n)$}

This aim of this section is to establish
  a generating function for $ M_{\omega}(1,4,2n)
       -M_{\omega}(3,4,2n)$.

       Recall some   $q$-series notations
\begin{align*}
(a)_\infty:=(a;q)_\infty:&=\prod_{n=0}^\infty
(1-aq^n),\\[6pt]
(a_1,a_2,\ldots,a_k)_\infty :=(a_1,a_2,\ldots,a_k;q)_\infty
:&=(a_1;q)_\infty
 (a_2;q)_\infty \cdots (a_k;q)_\infty,\\[6pt]
 [a_1,a_2,\ldots,a_k]_\infty:=[a_1,a_2,\ldots,a_k;q]_\infty
  :&=(a_1,q/a_1,a_2,q/a_2,\ldots,
  a_k, q/a_k;q)_\infty,\\[6pt]
  J_{r,m}:&= (q^r, q^{m-r},q^m;q^m)_\infty,
  \end{align*}
  and
  \[
 J_m
  :=(q^m;q^m)_\infty.
  \]

  \begin{lemma} \label{L-1}
        We have
        \begin{align}\label{v-1}
            &\sum_{n\geq 0} (M_{\omega}(1,4,4n) -M_{\omega}
            (3,4,4n))q^n
            \nonumber\\[6pt]
            =& \frac{1}{4J_1}A_0(q)B_0(q)(1-\varphi(q)^2)
            +\frac{q}{J_1}\bigg(
            \frac{1}{4}A_2(q)B_2(q)(1-\varphi(q)^2)
            -A_2(q)B_1(q)\psi(q)^2 \nonumber\\[6pt]
            &
            +(A_0(q)B_2(q)
            +A_2(q)B_0(q))\psi(q^2)^2 \bigg)
            -\frac{q^2}{J_1}A_0(q)
            B_3(q)\psi(q)^2
        \end{align}
        and
        \begin{align}\label{v-2}
            &\sum_{n\geq 0} (M_{\omega}(1,4,4n+2)-M_{\omega}
            (3,4,4n+2))q^n
            \nonumber\\[6pt]
            =& \frac{1}{4J_1}(A_0(q)B_2(q)
            +A_2(q)B_0(q))(\varphi(q)^2-1)
            +\frac{1}{J_1}A_0(q)(
            B_1(q)\psi(q)^2 -B_0(q)\psi(q^2)^2)
            \nonumber\\[6pt]
            & -\frac{q}{J_1}A_2(q)B_2(q)\psi(q^2)^2
            +\frac{q^2}{J_1}A_2(q)B_3(q)\psi(q)^2,
        \end{align}
        where
        \begin{align}
            A_0(q):&=\frac{(q^2,q^{6},q^{8};q^{8}
                )_\infty
            }{(-q,-q^{7};q^{8})_\infty}, \
            A_2(q):=\frac{(q^2,q^{6},q^{8};
            q^{8})_\infty
            }{(-q^{3},-q^{5};
            q^{8})_\infty},\
            B_0(q):=\frac{(q^{6},q^{10},q^{16};q^{16})_\infty
            }{(-q^{3},-q^{13};q^{16})_\infty}
            ,\label{v-3}\\[6pt]
            B_1(q):&=\frac{(q^{2},q^{14},q^{16};q^{16})_\infty
            }{(-q,-q^{15};q^{16})_\infty}
            ,\  B_2(q):=\frac{(q^{6},q^{10},q^{16}
                ;q^{16})_\infty
            }{(-q^{5},-q^{11};
            q^{16})_\infty},\
            B_3(q):=\frac{(q^{2},q^{14},q^{16};q^{16})_\infty
            }{(-q^{7},-q^{9};q^{16})_\infty}
            ,\label{v-4}\\[6pt]
            \varphi(q):&=\sum_{n=-\infty}^\infty
            q^{n^2}=
            \frac{J_2^{5}
            }{J_1^2
                J_4^2},\quad
            \psi(q):=\sum_{n=0}^\infty
            q^{n(n+1)/2}=\frac{J_2^2
            }{J_1}. \label{v-5}
        \end{align}
    \end{lemma}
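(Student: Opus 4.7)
The plan is to derive both identities from a single closed form for the $\omega$-weighted crank series and then to perform $2$-dissections in $q$. By orthogonality of fourth roots of unity, if we set
$$R(z,q):=\sum_{\pi}\omega(\pi)\,z^{\mathrm{crank}(\pi)}q^{|\pi|},$$
then
$$\sum_{n\geq 0}\bigl(M_{\omega}(1,4,n)-M_{\omega}(3,4,n)\bigr)q^{n}=\frac{R(i,q)-R(-i,q)}{2i}.$$
A workable formula for $R(z,q)$ can be obtained by differentiating at $y=1$ the bivariate refinement $C(z,y,q):=\sum_{\pi}z^{\mathrm{crank}(\pi)}y^{\omega(\pi)}q^{|\pi|}$, which in turn follows from the Andrews--Garvan crank generating function via the case split $\omega(\pi)=0$ versus $\omega(\pi)\geq 1$ in the definition of the crank.

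Next I would specialise at $z=\pm i$. The basic simplification $(iq;q)_{\infty}(-iq;q)_{\infty}=(-q^{2};q^{2})_{\infty}=J_{4}/J_{2}$ is available, and after Jacobi triple-product manipulations the remaining theta products reorganise into the building blocks $A_{0},A_{2}$ of (\ref{v-3}), which arise from the $8$-dissection of the infinite products at the root of unity $\pm i$, and $B_{0},B_{1},B_{2},B_{3}$ of (\ref{v-4}), which come from the finer $16$-dissection needed to split the $q^{4n}$ from the $q^{4n+2}$ parts. To reach the form of the right-hand sides of (\ref{v-1}) and (\ref{v-2}) I would then apply the classical dissection $\varphi(q)=\varphi(q^{4})+2q\psi(q^{8})$, square it, and use the analogous $2$-dissections of $\psi(q)^{2}$, $J_{1}$, and $1/J_{1}$. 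Collecting terms and matching against the stated right-hand sides completes the identities.

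The main obstacle is the bookkeeping in the last step. After specialisation at $\pm i$ and two successive $2$-dissections, a large number of theta products proliferate, and it is not obvious a priori that they should recombine into summands of the shape $A_{i}(q)B_{j}(q)$ multiplied by one of $1-\varphi(q)^{2},\ \psi(q)^{2}$, or $\psi(q^{2})^{2}$. Verifying this requires repeated application of Jacobi's triple product to rewrite products of the form $J_{a,m}/(-q^{b},-q^{m-b};q^{m})_{\infty}$, careful tracking of the powers of $q$ and the signs arising from specialisation, and patient matching with the eight summands appearing on the right-hand sides of (\ref{v-1}) and (\ref{v-2}). Once each piece is placed in canonical form, equating coefficients of $q^{2n}$ and $q^{2n+1}$ in the filtered generating function yields both claimed identities.
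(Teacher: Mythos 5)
Your overall skeleton --- filter the $\omega$-weighted crank generating function at the fourth roots of unity, then dissect --- is the same as the paper's, which starts from the Mao--Xia formula \eqref{v-7} (the generating-function form of your $R(\pm i,q)$ reduction). But as written the proposal has a concrete gap at its central step. After specialising at $z=\pm i$ and differentiating at $y=1$, what survives is not a theta product but a Lambert series: the paper's computation \eqref{v-11-1} gives
\begin{equation*}
\sum_{n\geq 0}\bigl(M_{\omega}(1,4,n)-M_{\omega}(3,4,n)\bigr)q^{n}
=-\frac{J_1J_2}{J_4}\sum_{n\geq 1}\frac{q^{n}}{1+q^{2n}},
\end{equation*}
and the factor $1-\varphi(q)^2$ in \eqref{v-1}--\eqref{v-2} enters only through the classical evaluation $\sum_{n\geq1}q^{n}/(1+q^{2n})=\tfrac14\left(\varphi(q)^2-1\right)$, i.e.\ \eqref{v-12}. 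No amount of Jacobi triple-product manipulation of the infinite products will produce $\varphi(q)^2$; you need this Lambert-series identity (or an equivalent), and it is absent from your plan. The resulting closed form $\tfrac14\,(J_1J_2/J_4)\,(1-\varphi(q)^2)$ in \eqref{v-13} is the hinge of the whole proof and must be reached explicitly before any dissection can begin.

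The dissection step also needs sharper inputs than ``two successive $2$-dissections.'' The blocks $A_0,A_2$ are the components of the $4$-dissection (in $q$) of $J_2=(q^2;q^2)_\infty$, namely $J_2=A_0(q^4)-q^2A_2(q^4)$ (Xia--Yao), not of an ``$8$-dissection at the root of unity $\pm i$''; the blocks $B_0,\dots,B_3$ come from Lewis's $4$-dissection $J_1=B_0(q^4)-qB_1(q^4)-q^2B_2(q^4)+q^7B_3(q^4)$; and one uses $\varphi(q)^2=\varphi(q^4)^2+4q\psi(q^4)^2+4q^2\psi(q^8)^2$ (squaring your dissection of $\varphi$ works too, but then you need $\varphi(q^4)\psi(q^8)=\psi(q^4)^2$ to reach the stated form). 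No dissection of $1/J_1$ is required --- the denominator is $J_4$, already a series in $q^4$ --- and the final extraction must be of the residues $0$ and $2$ modulo $4$ (followed by replacing $q^4$ by $q$), not of $q^{2n}$ versus $q^{2n+1}$ as your last sentence indicates.
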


    \noindent{\it Proof.}  In \cite{Mao-Xia},
      Mao and Xia
     proved that
    \begin{align}\label{v-7}
        \sum_{n\geq 0} M_{\omega}(a,k,n)q^n =
        & \frac{1}{k} \sum_{j=0}^{k-1}
        \zeta_k^{-aj}\frac{J_1
        }{(\zeta_k^j q;q)_\infty
            (q/\zeta_k^j;q)_\infty}
        \left(\sum_{n=1}^\infty
        \frac{\zeta_k^{-j}q^n}{1-q^n\zeta_k^{-j}}
        -S(q)\right)\nonumber\\[6pt]
        =& T(q)
        +\frac{1}{k} \sum_{j=1}^{k-1}
        \zeta_k^{-aj}\frac{J_1
        }{(\zeta_k^j q;q)_\infty
            (q/\zeta_k^j;q)_\infty}
        \left(\sum_{n=1}^\infty
        \frac{\zeta_k^{-j}q^n}
        {1-q^n\zeta_k^{-j}}
        -S(q)\right),
    \end{align}
    where $\zeta_k=e^{2\pi i/k}$ and
    \begin{align}\label{v-8}
        T(q):=\frac{q}{k(1-q)J_1}, \qquad S(q):=\sum_{n=1}^\infty
        \frac{q^{n+1}}{1-q^{n+1}}.
    \end{align}
    It is easy to check that
    \begin{equation} \label{v-9}
        \frac{J_1
        }{(\zeta_4^j q;q)_\infty
            (q/\zeta_4^j;q)_\infty}=\left\{ \begin{aligned}
            &\frac{J_1J_2
            }{J_4},\qquad\quad\
            {\rm if }\ j=1,3, \\
            & \frac{J_1^3}{J_2^2},
            \ \  \qquad \qquad
            {\rm if}\ j=2.
        \end{aligned} \right.
    \end{equation}
    Moreover,
    \begin{align}\label{v-10}
        \sum_{n=1}^\infty
        \frac{\zeta_4^{-j}q^n}
        {1-q^n\zeta_4^{-j}}=\zeta_4^{-j}
        \sum_{n=1}^\infty
        \frac{q^n}{1-q^{4n}}+\zeta_4^{-2j}
        \sum_{n=1}^\infty \frac{q^{2n}}{1-q^{4n}}+\zeta_4^{-3j}
        \sum_{n=1}^\infty
        \frac{q^{3n}}{1-q^{4n}}+\sum_{n=1}^\infty
        \frac{q^{4n}}{1-q^{4n}}.
    \end{align}
    Setting $k=4$ and $a=1,3$ in \eqref{v-7}
    and employing \eqref{v-9}
    and \eqref{v-10},
    we deduce that
    \begin{align}\label{v-11-1}
        \sum_{n\geq 0} (M_{\omega}(1,4,n)-M_{\omega}
        (3,4,n))q^n=&
        \frac{J_1
            J_2}{J_4}
        \sum_{n=1}^\infty
        \frac{q^{3n}-q^n}{1-q^{4n}}
        =-\frac{J_1J_2}{J_4}
        \sum_{n=1}^\infty
        \frac{q^n}{1+q^{2n}}.
    \end{align}
    The following identity appears in
    Berndt's book \cite[(3.2.9)]{Berndt-2006}
    \begin{align}\label{v-12}
        \sum_{n=1}^\infty
        \frac{q^n}{1+q^{2n}}
        =\frac{1}{4}\left(
        \varphi(q)^2-1\right),
    \end{align}
    where $\varphi(q)$ is defined by
    \eqref{v-5}.
    Combining \eqref{v-11-1}
    and \eqref{v-12} yields
    \begin{align}\label{v-13}
        \sum_{n\geq 0} (M_{\omega}(1,4,n)-M_{\omega}
        (3,4,n))q^n
        =&\frac{1}{4}\frac{J_1J_2}{
            J_4}
        \left(1- \varphi(q)^2\right).
    \end{align}
    The following identity was proved by Xia and
    Yao \cite[Lemma 3.2, (3.4)]{Xia}
    \begin{align}\label{v-14}
        J_2
        = A_0(q^4)-q^2  A_2(q^4),
    \end{align}
    where $A_0(q)$ and $A_2(q)$ are defined
    by \eqref{v-3}.
    Lewis \cite[Corollary 6]{Lewis} proved
    that
    \begin{align}\label{v-15}
        J_1=&B_0(q^4) -qB_1(q^4)-q^2B_2(q^4)+q^7 B_3(q^4) ,
    \end{align}
    where $B_0(q)$, $B_1(q)$,
    $B_2(q)$ and $B_3(q)$ are
    defined by \eqref{v-3}
    and \eqref{v-4}.
    It follows from Entry 25 (v)
    and (vi) in Berndt's
    book \cite[p. 40]{Berndt} that
    \begin{align}\label{v-16}
        \varphi(q)^2=&\varphi(q^2)^2+4q\psi(q^4)^2 \nonumber
        \\[6pt]
        =&\varphi(q^4)^2+4q\psi(q^4)^2 +4q^2\psi(q^8)^2,
    \end{align}
    where $\psi(q)$ is defined by \eqref{v-5}.
    If we substitute \eqref{v-14}, \eqref{v-15}
    and \eqref{v-16} into \eqref{v-13},
    then extract
    those  terms in which the power of $q$
    is
    congruent to $i\ (i=0,2)$
    modulo 4, then
    divide  by $q^i$
    and
    replace  $q^4$ by $q$, we arrive at
    \eqref{v-1} and \eqref{v-2}. This completes the proof
    of Lemma \ref{L-1}.
    \qed

 \section{The  Generating
  Function for $NT(2,8,2n)- NT(6,8,2n)$}

 In this Section, we establish
  the generating function for $NT(2,8,2n)- NT(6,8,2n)$.

 \begin{theorem}\label{thrank}
    We have
    \begin{align}
        &\sum_{n=0}^{\infty}\left(NT(2,8,2n)
        -NT(6,8,2n)\right)q^n
        =R_1(q)+R_2(q),\label{thn8}
        \intertext{where}
            R_1(q):&=\left(\frac{[-q^{3};q^{8}]^2_\infty }{[-1;q^{8}]_\infty }-\frac{q^2[-q;q^{8}]^2_\infty }{[-q^{4};q^{8}]_\infty }\right)\times \frac{[q^{2};q^{8}]_\infty J_{8}^3}{2[-q^2,q^{3};q^{8}]_\infty J_1^2}\nonumber
        \intertext{and}
            R_2(q):&=\bigg(
        \frac{[q^2,-q^3,-q^3;q^{8}]_\infty }{2[-1,q,q,q,q^3;q^{8}]_\infty }
        -
        \frac{2[-q^3,-q^3,q^4;q^{8}]_\infty }{[-1,-1,-q^2,-q^2,-q^4;q^{8}]_\infty }
        \nonumber\\&\qquad
        -\frac{3q[q^2,-q^3-q^3;q^{8}]_\infty }{2[-1,q,q^{3},q^3,q^3;q^{8}]_\infty }
        -\frac{2q^2[-q,-q,q^4;q^{8}]_\infty }{[-1,-q^2,-q^2,-q^{4},-q^4;q^{8}]_\infty }
        \nonumber\\&\qquad-\frac{q^2[-q,-q,q^2;q^{8}]_\infty }{2[q,q,q,q^3,-q^{4};q^{8}]_\infty }
        +\frac{3q^3[-q,-q,q^2;q^{8}
        ]_\infty }{2[q,q^3,q^3,q^3,
        -q^4;q^{8}]_\infty }
        \bigg)\times
         \frac{[q^{2};q^{8}]^3_\infty
         J_{8}^5}{[-q^2,q^{3};
         q^{8}]_\infty J_1^2}.
         \nonumber
    \end{align}
 \end{theorem}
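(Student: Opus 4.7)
The proof will parallel the derivation of Lemma \ref{L-1}, but applied to the rank statistic. The starting point is the bivariate generating function
\[
F(z,q) := \sum_{\pi} \#(\pi)\, z^{{\rm rank}(\pi)} q^{|\pi|},
\]
which can be written down either by $z$-differentiation of Dyson's rank generating function $\sum_{n=0}^{\infty} q^{n^2}/((zq;q)_n (z^{-1}q;q)_n)$, or, more conveniently, as the rank analogue of the Mao--Xia identity \eqref{v-7}. Applying the roots-of-unity filter
\[
\sum_{n \geq 0} NT(r,8,n)\, q^n = \frac{1}{8}\sum_{j=0}^{7} \zeta_8^{-rj} F(\zeta_8^j, q)
\]
and forming the difference $NT(2,8,n) - NT(6,8,n)$ annihilates the contributions from $j=0$ and $j=4$ and pairs the remaining six terms into three conjugate pairs indexed by $j = \pm 1, \pm 2, \pm 3$.

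To evaluate $F$ at the primitive eighth and fourth roots of unity I will invoke the classical rank dissection identities of Atkin--Swinnerton-Dyer, together with the expansion of $\sum_{n \geq 1} \zeta_8^{-jn} q^n/(1 - \zeta_8^{-j} q^n)$ into cosets modulo $8$, exactly as was done in \eqref{v-10} of the present paper. This produces a closed form for $\sum_n (NT(2,8,n) - NT(6,8,n)) q^n$ as a finite linear combination of theta quotients in base $q$ and Lambert-type sums.

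The second stage is to apply the $2$-dissections of $\varphi(q)^2$ and $\psi(q)^2$ from \eqref{v-16}, together with dissection identities for $J_1$ and $J_2$ of the type \eqref{v-14}--\eqref{v-15}, in order to split the above expression according to the parity of $n$, extract the even-indexed piece, and substitute $q^2 \mapsto q$. The terms then regroup into two families: the $j = \pm 2$ (i.e.\ fourth-root-of-unity) contributions produce the piece $R_1(q)$, whose denominator structure involves $[-1; q^8]_\infty$ and $[-q^4; q^8]_\infty$, while the genuine eighth-root-of-unity contributions from $j = \pm 1, \pm 3$ produce $R_2(q)$, with its extra factor $[q^2; q^8]_\infty^3 J_8^5$.

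The main obstacle is the sheer volume of theta-quotient bookkeeping. Each of the six surviving terms, after $2$-dissection and the substitution $q^2 \mapsto q$, expands into several summands carrying different shifted bases of the form $[a_1,\ldots,a_k; q^8]_\infty$. Repeated application of the Jacobi triple product, the quintuple product identity, and the addition/permutation symmetries of the $[\,\cdot\,; q^8]_\infty$ notation will be needed to cancel spurious factors and arrange everything in the precise form of $R_1(q) + R_2(q)$. A useful safeguard is to first verify the identity against a power-series computation to high order, and only then commit to the full symbolic manipulation.
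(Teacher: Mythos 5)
Your overall strategy (a roots-of-unity filter on the parts-weighted rank generating function, followed by a $2$-dissection) points in the right general direction, and your observation that the $j=0,4$ terms cancel in $NT(2,8,n)-NT(6,8,n)$ is correct. The paper in fact short-circuits your first stage by quoting Lemma 2.1 of \cite{mao-2} (equation \eqref{gen7121}), which already packages the filtered sum as the generalized Lambert series
\[
\sum_{n\neq 0}\frac{(-1)^{n}q^{n(3n+1)/2+n}(1-q^{n})}{(1-q^{8n})^{2}}
\quad\text{and}\quad
\sum_{n\neq 0}\frac{(-1)^{n}q^{n(3n+1)/2+n}(1-q^{n})}{1-q^{8n}};
\]
so that part of your plan is workable, if redundant.

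The genuine gap is in your second stage. The objects that must be $2$-dissected are not theta functions but generalized Lambert series with the \emph{squared} denominators $(1-q^{8n})^{2}$, which arise precisely from the weight $\#(\pi)$; the Atkin--Swinnerton-Dyer rank dissections and the $\varphi^{2}$, $\psi^{2}$, $J_{1}$, $J_{2}$ dissections you invoke handle theta functions and simple-pole Lambert series, and give you no way to extract the even part of these double-pole sums. The paper's mechanism is different and essential: after reducing to denominators $(1\pm q^{4n})^{2}$ via a partial-fraction identity, it splits the summation index modulo $4$ and applies Chan's generalized Lambert series identities \eqref{chan0} and \eqref{chan1} to each piece, producing the eight identities \eqref{41}--\eqref{41010}. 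The payoff is the structural fact recorded at \eqref{plq}: every residual Lambert series in the resulting expression (base $q^{16}$, with shifts such as $q^{16n+22}$) contributes only odd powers of $q$, so the even part is carried entirely by the explicit theta-quotient block $L(q)$, whose dissection via \eqref{x4}, \eqref{x22} and \eqref{zp1} yields $R_{1}(q)+R_{2}(q)$. Without an analogue of \eqref{chan0}--\eqref{chan1} (or some other closed evaluation of the double-pole sums), your plan stalls at the dissection step, and no amount of Jacobi triple product or quintuple product bookkeeping will substitute for it. Your proposed numerical check is sensible but does not fill this hole.
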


In order to prove Theorem \ref{thrank},
 we first prove some lemmas.

 \begin{lemma}
    We have
    \begin{align}\label{j1}
        q[-q^{2};q^{16}]_\infty-[-q^{6};q^{16}]_\infty&=-\frac{\left[q^{2}, q^{4}, q^{4}, q^{6}, q^{8} ; q^{16}\right]_{\infty}J_1J_{16}}{J_2^2},
        \\
        \label{x4}
        X\left(-q^{12};q^{16}\right)
        &   =\frac{1}{4}
        -\frac{[q^{4},q^{4},q^{8};q^{16}]_\infty
            J_{16}^2}{2[-q^{4};
            q^{16}]^2_\infty[
            -1,-q^{8};q^{16}]_\infty},
        \intertext{and}
        X\left(q^{22};q^{16}\right)
        &   =  \frac{7}{8}-\frac{3q^2
         [q^{4};q^{16}]^3_\infty
            J_{16}^2}{8[q^{6};
            q^{16}]^3_\infty[q^{2};q^{16}
            ]_\infty}+\frac{[q^{4};q^{16}]^3_\infty
            J_{16}^2}{8[q^{2};q^{16}]^3_\infty[q^{6};q^{16}]_\infty},\label{x22}
        \intertext{where}
        X(a;q):&=\sum_{n=0}^\infty\left(\frac{aq^{n}}{1-aq^n}-\frac{q^{n+1}/a}{1-q^{n+1}/a}\right)\label{defx}.
    \end{align}
 \end{lemma}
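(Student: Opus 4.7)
The lemma bundles a theta function identity \eqref{j1} with two evaluations \eqref{x4} and \eqref{x22} of the Appell--Lerch sum $X(a;q)$. For \eqref{j1} I would apply the Jacobi triple product to each of $[-q^2;q^{16}]_\infty$ and $[-q^6;q^{16}]_\infty$. Multiplying the stated identity by $J_{16}$ rewrites its left side as $\sum_{n\in\mathbb Z}q^{8n^2-6n+1}-\sum_{n\in\mathbb Z}q^{8n^2-2n}$; a quick inspection of small exponents shows this difference is supported on the triangular numbers with a period-$4$ sign pattern, the signature of the Jacobi quintuple product identity. I would then match it to the right side of \eqref{j1}---whose five bracket factors are exactly what the quintuple product produces---by a suitable specialization. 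Alternatives include identifying both sides as the same eta quotient using the $2$-dissections \eqref{v-14}--\eqref{v-15} and \eqref{v-16}, or locating the identity directly in Berndt's compendium \cite{Berndt}.

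For \eqref{x4} and \eqref{x22} the plan is to split $X(a;q)$ into two Lambert sums and regroup by residue class modulo $16$. When $a=-q^{12}$ this gives
\[
X(-q^{12};q^{16})=\sum_{n\geq 0}\frac{q^{4+16n}}{1+q^{4+16n}}-\sum_{n\geq 0}\frac{q^{12+16n}}{1+q^{12+16n}},
\]
a Lambert series of exactly the same type as $\sum_{n\geq 1}q^n/(1+q^{2n})=(\varphi(q)^2-1)/4$, already used in the proof of Lemma~\ref{L-1}. I would invoke the $16$-modular analogue (derivable by iterating that identity with the $\varphi$- and $\psi$-dissections in \eqref{v-16}, or by quoting the corresponding entry of \cite{Berndt}) to collapse it to the single theta quotient on the right of \eqref{x4}. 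Identity \eqref{x22} is the same story except that $a=q^{22}>q^{16}$, so the $n=0$ term of the second Lambert sum equals $q^{-6}/(1-q^{-6})=-1/(1-q^6)$, which produces an extra constant $+1$. Combining this $+1$ with a $-1/8$ from the residue-class expansion yields the $7/8$ on the right of \eqref{x22}, and the same dissection delivers the two theta quotients with coefficients $-3/8$ and $+1/8$.

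The main obstacle I anticipate is the bookkeeping in \eqref{x22}: the constant $7/8$ and the two theta quotients with distinct coefficients must be read off from a delicate combination of the pole at $q^{-6}$ with a residue-class regrouping modulo $16$, and the signs and prefactors must line up exactly. If the direct route proves unwieldy, a reliable fallback is the representation $X(a;q)=-a\frac{d}{da}\log\bigl((a;q)_\infty(q/a;q)_\infty\bigr)$: differentiating the Jacobi triple product $(a;q)_\infty(q/a;q)_\infty(q;q)_\infty=\sum_{n\in\mathbb Z}(-1)^n a^n q^{n(n-1)/2}$ yields a bilateral theta representation of $X(a;q)$ which, at the special values $a=-q^{12}$ and $a=q^{22}$ with base $q^{16}$, should reduce to the claimed formulas.
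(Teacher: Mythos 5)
Your treatment of \eqref{x4} is sound and arguably more elementary than the paper's: the dissection
$X(-q^{12};q^{16})=\sum_{n\ge 0}\bigl(q^{16n+4}/(1+q^{16n+4})-q^{16n+12}/(1+q^{16n+12})\bigr)$
is a Lambert series over $d\equiv\pm1\pmod 4$ in the variable $Q=q^{4}$, which the two-squares identity (with $Q\mapsto -Q$) evaluates as $\tfrac14\bigl(1-\varphi(-Q)^2\bigr)$, and the product on the right of \eqref{x4} does simplify to $\tfrac14-\tfrac14 (Q;Q)_\infty^2/(-Q;Q)_\infty^2$. The paper instead obtains \eqref{x4} and \eqref{x22} uniformly from Chan's generalized Lambert series identity \eqref{genchan} (quoted from \cite{gen}), specialized in base $q^{16}$ at $(a,b,c)=(-q^{4},-q^{4},-q^{8})$ and at two further triples, together with the elementary facts $X(-q^{8};q^{16})=0$, $X(-q^{16};q^{16})=\tfrac12$ and $X(-q^{4};q^{16})=-X(-q^{12};q^{16})$.

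The genuine gaps are in \eqref{j1} and \eqref{x22}. For \eqref{j1}, your observation that $J_{16}\bigl(q[-q^{2};q^{16}]_\infty-[-q^{6};q^{16}]_\infty\bigr)$ is supported on triangular numbers with a period-$4$ sign pattern is correct but circular: splitting that signed triangular series by the residue of its index modulo $4$ reproduces exactly the two triple products you started from, and the exponents $k(k+1)/2$ with a period-$4$ character are not of the pentagonal shape $n(3n+1)/2$ with the $z^{3n}-z^{-3n-1}$ pattern that the quintuple product governs, so that identity is not the right tool. The missing idea is the paper's difference-of-squares factorization: multiplying the left side of \eqref{j1} by the known sum $[-q^{6};q^{16}]_\infty+q[-q^{2};q^{16}]_\infty=J_2^2/(J_1J_{16})$ (identity \eqref{zp1}, from \cite{four}) produces $q^{2}[-q^{2};q^{16}]_\infty^{2}-[-q^{6};q^{16}]_\infty^{2}$, which \cite{four} evaluates as $-[q^{2},q^{4},q^{4},q^{6},q^{8};q^{16}]_\infty$; dividing back gives \eqref{j1}. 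For \eqref{x22}, your constant bookkeeping (the $+1$ contributed by the $n=0$ pole) is right, but the remaining Lambert series runs over $d\equiv 3,5\pmod 8$ in $Q=q^{2}$, whose divisor-sum coefficients are $\tfrac12\bigl(\chi_{-8}(d)-\chi_{-4}(d)\bigr)$; this is not an iterate of the $\varphi^2$ identity but a combination of the $x^2+y^2$ and $x^2+2y^2$ theta series, and even after reaching $X(q^{22};q^{16})=\tfrac78+\tfrac18\bigl(2\varphi(q^{2})\varphi(q^{4})-\varphi(q^{2})^2\bigr)$ you would still owe a proof that this bracket equals the two stated theta quotients with coefficients $1$ and $-3q^{2}$ --- a nontrivial identity your plan does not address, whereas \eqref{genchan} produces those quotients directly.
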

 \begin{proof}
    Equation \eqref{j1} follows immediately from the two identities (see \cite[Lemma 4.1]{four} and \cite[Eq.(5.5)]{four}, respectively):
    \begin{align}
        \frac{\left(q^{16} ; q^{16}\right)_{\infty}}{\left(q^{2} ; q^{2}\right)_{\infty}^{2}}\left(\left[-q^{6} ; q^{16}\right]_{\infty}+q\left[-q^{2} ; q^{16}\right]_{\infty}\right) &=   \frac{1}{J_1}\label{zp1}
        \intertext{and}
        \left[-q^{6} ; q^{16}\right]_{\infty}^{2}-q^{2}\left[-q^{2} ; q^{16}\right]_{\infty}^{2}&=\left[q^{2}, q^{4}, q^{4}, q^{6}, q^{8} ; q^{16}\right]_{\infty}.\nonumber
    \end{align}
    Recall \cite[Eq. (3.2)]{gen}:
    \begin{equation}\label{genchan}
        \begin{aligned}
            \frac{[a b, b c, c a]_{\infty}
                J_1^{2}}{[a, b, c, a b c]_{\infty}}=& 1+\sum_{k=0}^{\infty} \frac{a q^{k}}{1-a q^{k}}-\sum_{k=1}^{\infty} \frac{q^{k} / a}{1-q^{k} / a}+\sum_{k=0}^{\infty} \frac{b q^{k}}{1-b q^{k}} \\
            &-\sum_{k=1}^{\infty} \frac{q^{k} / b}{1-q^{k} / b}+\sum_{k=0}^{\infty} \frac{c q^{k}}{1-c q^{k}}-\sum_{k=1}^{\infty} \frac{q^{k} / c}{1-q^{k} / c} \\
            &-\sum_{k=0}^{\infty} \frac{a b c q^{k}}{1-a b c q^{k}}+\sum_{k=1}^{\infty} \frac{q^{k} / a b c}{1-q^{k} / a b c}.
        \end{aligned}
    \end{equation}
    Replacing $q$ by $q^{16}$, setting $a=b=-q^4,c=-q^8$ and noting that $$ X(-q^8;q^{16})=0,\,\,   X(-q^{16};q^{16})=\frac{1}{2},$$ we obtain
    \begin{align}\nonumber
        \frac{1}{2}+2X\left(-q^4;q^{16}\right)
        =\frac{[q^{4},q^{4},q^{8};q^{16}]_\infty
            J_{16}^2}{[-q^{4};q^{16}]^2_\infty[-1,-q^{8};q^{16}]_\infty},
    \end{align}
    which together with $   X(-q^4;q^{16})=-    X(-q^{12};q^{16})$  gives \eqref{x4}.

    Similarly, applying \eqref{genchan}, we find that
    \begin{align}
        1-3X\left(q^{22};q^{16}\right)
        -X\left(q^{-18};q^{16}\right)&
        =\frac{[q^{-12};q^{16}]^3_\infty
            J_{16}^2}{[q^{-6};q^{16}]^3_\infty[q^{-18};q^{16}]_\infty},\nonumber
        \\[6pt]
        4+3X\left(q^{-18};q^{16}\right)
        +X\left(q^{22};q^{16}\right)&
        =\frac{[q^{-36};q^{16}]^3_\infty
            J_{16}^2}{[q^{-18};q^{16}]^3_\infty[q^{-54};q^{16}]_\infty}.\nonumber
    \end{align}
    Then equation \eqref{x22} follows.
 \end{proof}
 Recall \cite[Lemma 2.3]{mao-2}:
 \begin{align}
    &\frac{(q)_{\infty}^{2}}{[b_1, b_2, b_3]_{\infty}}\left[    X(b_1;q)+   X(b_2;q)+   X(b_3;q)\right]
    \nonumber\\&=\frac{1}{[ b_2/b_1, b_3/b_1]_{\infty}}\sum_{n=-\infty}^{\infty} \frac{(-1)^{n} b_1 q^{3n(n+1) / 2}}{(1-b_1 q^{n})^2}\times\left(\frac{b_1^2q}{b_2b_3}\right)^n\nonumber\\&\quad
    +\frac{1}{[ b_1/b_2, b_3/b_2]_{\infty}}\sum_{n=-\infty}^{\infty} \frac{(-1)^{n} b_2 q^{3n(n+1) / 2}}{(1-b_2 q^{n})^2}\times\left(\frac{b_2^2q}{b_1b_3}\right)^n
    \nonumber\\&\quad
    +\frac{1}{[ b_1/b_3, b_2/b_3]_{\infty}}\sum_{n=-\infty}^{\infty} \frac{(-1)^{n} b_3 q^{3n(n+1) / 2}}{(1-b_3 q^{n})^2}\times\left(\frac{b_3^2q}{b_1b_2}\right)^n\label{chan0}
    \intertext{and}
    &\frac{1}{2[ b_1, b_2]_{\infty}}\left\{\sum_{n=1}^\infty\frac{-2q^{n}}{(1-q^{n})^2}+\mathcal{S}_1(b_1,b_2;q)\left[2-\mathcal{S}_1(b_1,b_2;q)\right]-\mathcal{S}_2(b_1,b_2;q)\right\}\nonumber\\&=
    \frac{1}{[ b_1, b_2]_{\infty}}\sum_{\mbox{\tiny$\begin{array}{c} n=-\infty\\ n\neq0\end{array}$}}^\infty \frac{(-1)^{n} q^{3n(n+1) / 2}}{(1- q^{n})^2}\times\left(\frac{q}{b_1b_2}\right)^n\nonumber\\&\quad+\frac{1}{[ b_2/b_1, 1/b_1]_{\infty}}\sum_{n=-\infty}^{\infty} \frac{(-1)^{n} b_1 q^{3n(n+1) / 2}}{(1-b_1 q^{n})^2}\times\left(\frac{b_1^2q}{b_2}\right)^n\nonumber\\&\quad
    +\frac{1}{[ b_1/b_2, 1/b_2]_{\infty}}\sum_{n=-\infty}^{\infty} \frac{(-1)^{n} b_2 q^{3n(n+1) / 2}}{(1-b_2 q^{n})^2}\times\left(\frac{b_2^2q}{b_1}\right)^n
    ,\label{chan1}
 \end{align}
 where
 \begin{align}
    \mathcal{S}_1(b_1,b_2;q):&= X(b_1;q)+   X(b_2;q)\label{defs1}
    \intertext{and}
    \mathcal{S}_2(b_1,b_2;q):&=\sum_{n=0}^\infty
    \bigg(\frac{2b_1q^{n}-b_1^2q^{2n}}{(1-b_1q^{n})^2}
    +\frac{q^{2n+2}/b_1^2}{(1-q^{n+1}/b_1)^2}+\frac{2b_2q^{n}-b_2^2q^{2n}}{(1-b_2q^{n})^2}+\frac{q^{2n+2}/b_2^2}{(1-q^{n+1}/b_2)^2}
    \bigg)\label{defs2}.
 \end{align}
 Applying \eqref{chan0} and \eqref{chan1}, we obtain the following.
 \begin{lemma}
    We have
    \begin{align}\label{41}
        &   \sum_{\mbox{\tiny$\begin{array}{c} n=-\infty\\ n\neq0\end{array}$}}^\infty\frac{(-1)^{n}q^{\frac{n(3n+1)}{2}+n}}{(1-q^{4n})^2}\nonumber\\&=
        -\frac{J_1}{J_{16}} \sum_{\mbox{\tiny$\begin{array}{c} n=-\infty\end{array}$}}^\infty\frac{(-1)^nq^{24n^2+72n+53}}{(1+q^{16n+22})^2}
        \nonumber\\&\quad+\frac{\left[X\left(q^{12};q^{16}\right)+X\left(-q^{22};q^{16}\right)\right]
            \times\left[1-X\left(q^{12};q^{16}\right)-X\left(-q^{22};q^{16}\right)\right]}{2}\nonumber\\&\quad-\sum_{n=-\infty}^\infty\left(    \frac{q^{16n+12}}{2(1-q^{16n+12})^2}-
        \frac{q^{16n+22}}{2(1+q^{16n+22})^2}\right)-\sum_{n=1}^\infty\frac{q^{16n}}{(1-q^{16n})^2}\nonumber\\&\quad-
        \frac{q^3[-q^{2};q^{16}]_\infty J_{16}^2}{[-q^{6},q^8;q^{16}]_\infty}\times\left[X\left(q^4;q^{16}\right)+X\left(-q^{22};q^{16}\right)\right],\\
        \nonumber\\
        \label{42}
        &   \sum_{\mbox{\tiny$\begin{array}{c} n=-\infty\\ n\neq0\end{array}$}}^\infty\frac{(-1)^{n}q^{\frac{n(3n+1)}{2}+2n}}{(1-q^{4n})^2}\nonumber\\&=
        \frac{J_1}{J_{16}}  \sum_{\mbox{\tiny$\begin{array}{c} n=-\infty\end{array}$}}^\infty\frac{(-1)^nq^{24n^2+88n+75}}{(1+q^{16n+22})^2}
        \nonumber\\&\quad-\frac{\left[X\left(q^{12};q^{16}\right)+X\left(-q^{22};q^{16}\right)-2\right]
            \times\left[X\left(q^{12};q^{16}\right)+X\left(-q^{22};q^{16}\right)-1\right]}{2}\nonumber\\&\quad-\sum_{n=-\infty}^\infty\left(    \frac{q^{16n+12}}{2(1-q^{16n+12})^2}-
        \frac{q^{16n+22}}{2(1+q^{16n+22})^2}\right)-\sum_{n=1}^\infty\frac{q^{16n}}{(1-q^{16n})^2}\nonumber\\&\quad-
        \frac{q^3[-q^{2};q^{16}]_\infty J_{16}^2}{[-q^{6},q^8;q^{16}]_\infty}\times\left[X\left(-q^{22};q^{16}\right)+X\left(q^4;q^{16}\right)-1\right],\\
        \nonumber\\
        \label{49}
        &   \sum_{\mbox{\tiny$\begin{array}{c} n=-\infty\\ n\neq0\end{array}$}}^\infty\frac{(-1)^{n}q^{\frac{n(3n+1)}{2}+9n}}{(1-q^{4n})^2}\nonumber\\&=-
        \frac{J_1}{J_{16}}  \sum_{\mbox{\tiny$\begin{array}{c} n=-\infty\end{array}$}}^\infty\frac{(-1)^nq^{24n^2+104n+97}}{(1+q^{16n+22})^2}
        \nonumber\\&\quad-\frac{\left[X\left(q^{12};q^{16}\right)+X\left(-q^{22};q^{16}\right)-3\right]
            \times\left[X\left(q^{12};q^{16}\right)+X\left(-q^{22};q^{16}\right)-2\right]}{2}\nonumber\\&\quad-\sum_{n=-\infty}^\infty\left(    \frac{q^{16n+12}}{2(1-q^{16n+12})^2}-
        \frac{q^{16n+22}}{2(1+q^{16n+22})^2}\right)-\sum_{n=1}^\infty\frac{q^{16n}}{(1-q^{16n})^2}\nonumber\\&\quad-
        \frac{q^3[-q^{2};q^{16}]_\infty J_{16}^2}{[-q^{6},q^8;q^{16}]_\infty}\times\left[X\left(-q^{22};q^{16}\right)+X\left(q^4;q^{16}\right)-2\right],\\
        \nonumber\\
        \label{410}
        &   \sum_{\mbox{\tiny$\begin{array}{c} n=-\infty\\ n\neq0\end{array}$}}^\infty\frac{(-1)^{n}q^{\frac{n(3n+1)}{2}+10n}}{(1-q^{4n})^2}\nonumber\\&=
        \frac{J_1}{J_{16}}  \sum_{\mbox{\tiny$\begin{array}{c} n=-\infty\end{array}$}}^\infty\frac{(-1)^nq^{24n^2+56n+31}}{(1+q^{16n+22})^2}
        \nonumber\\&\quad-\frac{\left[X\left(q^{12};q^{16}\right)+X\left(-q^{22};q^{16}\right)\right]
            \times\left[X\left(q^{12};q^{16}\right)+X\left(-q^{22};q^{16}\right)+1\right]}{2}\nonumber\\&\quad-\sum_{n=-\infty}^\infty\left(    \frac{q^{16n+12}}{2(1-q^{16n+12})^2}-
        \frac{q^{16n+22}}{2(1+q^{16n+22})^2}\right)-\sum_{n=1}^\infty\frac{q^{16n}}{(1-q^{16n})^2}\nonumber\\&\quad-
        \frac{q^3[-q^{2};q^{16}]_\infty J_{16}^2}{[-q^{6},q^8;q^{16}]_\infty}\times\left[X\left(-q^{22};q^{16}\right)+X\left(q^{4};q^{16}\right)+1\right],\\
        \nonumber\\
        \label{411}
        &   \sum_{ n=-\infty}^\infty\frac{(-1)^{n}q^{\frac{n(3n+1)}{2}+n}}{(1+q^{4n})^2}\nonumber\\&=
        -\frac{J_1}{J_{16}} \sum_{\mbox{\tiny$\begin{array}{c} n=-\infty\end{array}$}}^\infty\frac{(-1)^nq^{24n^2+72n+53}}{(1-q^{16n+22})^2}
        \nonumber\\&\quad+
        \frac{[q^4,-q^{6};q^{16}]_\infty J_{16}^2}{[-1,-q^{4},q^{6};q^{16}]_\infty}\times\left[X\left(-q^{12};q^{16}\right)+X\left(q^{22};q^{16}\right)-\frac{1}{2}\right]
        \nonumber\\&\quad+
        \frac{q^3[-q^{2},q^4;q^{16}]_\infty J_{16}^2}{[q^6,-q^{8},-q^4;q^{16}]_\infty}\times\left[X\left(-q^{12};q^{16}\right)-X\left(q^{22};q^{16}\right)\right],\\
        \nonumber\\
        \label{422}
        &   \sum_{ n=-\infty}^\infty\frac{(-1)^{n}q^{\frac{n(3n+1)}{2}+2n}}{(1+q^{4n})^2}\nonumber\\&=
        \frac{J_1}{J_{16}}  \sum_{\mbox{\tiny$\begin{array}{c} n=-\infty\end{array}$}}^\infty\frac{(-1)^nq^{24n^2+40n+15}}{(1-q^{16n+10})^2}
        \nonumber\\&\quad+
        \frac{[q^{4},-q^{6};q^{16}]_\infty J_{16}^2}{[-1,q^{6},-q^{4};q^{16}]_\infty}\times\left[\frac{3}{2}-X\left(-q^{12};q^{16}\right)-X\left(q^{22};q^{16}\right)\right]
        \nonumber\\&\quad-
        \frac{q^3[-q^{2},q^4;q^{16}]_\infty J_{16}^2}{[q^6,-q^{8},-q^4;q^{16}]_\infty}\times\left[X\left(-q^{12};q^{16}\right)-X\left(q^{22};q^{16}\right)+1\right],\\
        \nonumber\\
        \label{499}
        &   \sum_{ n=-\infty}^\infty\frac{(-1)^{n}q^{\frac{n(3n+1)}{2}+9n}}{(1+q^{4n})^2}\nonumber\\&=-
        \frac{J_1}{J_{16}}  \sum_{\mbox{\tiny$\begin{array}{c} n=-\infty\end{array}$}}^\infty\frac{(-1)^nq^{24n^2+8n-15}}{(1-q^{16n-10})^2}
        \nonumber\\&\quad+
        \frac{[q^{4},-q^{6};q^{16}]_\infty J_{16}^2}{[-1,q^{6},-q^{4};q^{16}]_\infty}\times\left[X\left(-q^{12};q^{16}\right)+X\left(q^{22};q^{16}\right)-\frac{5}{2}\right]
        \nonumber\\&\quad-
        \frac{q^3[-q^{2},q^4;q^{16}]_\infty J_{16}^2}{[q^{6},-q^{8},-q^4;q^{16}]_\infty}\times\left[-X\left(-q^{12};q^{16}\right)+X\left(q^{22};q^{16}\right)-2\right],\\
        \nonumber\\
        \label{41010}
        &   \sum_{ n=-\infty}^\infty\frac{(-1)^{n}q^{\frac{n(3n+1)}{2}+10n}}{(1+q^{4n})^2}\nonumber\\&=-
        \frac{J_1}{J_{16}}  \sum_{\mbox{\tiny$\begin{array}{c} n=-\infty\end{array}$}}^\infty\frac{(-1)^nq^{24n^2+24n-13}}{(1-q^{16n-6})^2}
        \nonumber\\&\quad-
        \frac{[q^{4},-q^{6};q^{16}]_\infty J_{16}^2}{[-1,q^{6},-q^{4};q^{16}]_\infty}\times\left[X\left(-q^{12};q^{16}\right)+X\left(q^{22};q^{16}\right)+\frac{1}{2}\right]
        \nonumber\\&\quad-
        \frac{q^{3}[-q^{2},q^4;
        q^{16}]_\infty J_{16}^2}{
        [q^{6},-q^{8},-q^4;
        q^{16}]_\infty}\times
        \left[X\left(-q^{12};
        q^{16}\right)-X\left(q^{22}
        ;q^{16}\right)-1\right].
    \end{align}
 \end{lemma}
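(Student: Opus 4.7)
The plan is to derive each of the eight identities \eqref{41}--\eqref{41010} by specializing the two parametric identities \eqref{chan0} and \eqref{chan1} (quoted from \cite{mao-2}) after substituting $q\mapsto q^{16}$, and choosing the parameters $b_i$ so that the factors $1-b_iq^{16n}$ produce the poles $1\pm q^{16n+c}$ appearing on the right-hand sides. The four identities \eqref{41}, \eqref{42}, \eqref{49}, \eqref{410} (sums over $n\neq 0$ with $(1-q^{4n})^2$ in the denominator) are naturally produced by the two-parameter identity \eqref{chan1}, whose central LHS sum already excludes $n=0$; the four identities \eqref{411}, \eqref{422}, \eqref{499}, \eqref{41010} (sums over all $n\in\mathbb{Z}$ with $(1+q^{4n})^2$) are naturally produced by the three-parameter identity \eqref{chan0}, whose three RHS sums do include $n=0$. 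Within each family the four values of the shift $k\in\{1,2,9,10\}$ in the exponent $n(3n+1)/2+kn$ correspond to an index shift $n\mapsto n+j$ combined with multiplication by the appropriate power of $q$, so the four identities should be derivable from a single master computation.

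For the first family, I would apply \eqref{chan1} with $q\mapsto q^{16}$ and $(b_1,b_2)=(q^{12},-q^{22})$. With this choice, $\mathcal{S}_1(q^{12},-q^{22};q^{16})=X(q^{12};q^{16})+X(-q^{22};q^{16})$, which matches the combination of $X$'s in the quadratic piece of the RHS of each of \eqref{41}--\eqref{410}. The LHS term $\sum_{n\geq 1}-2q^n/(1-q^n)^2$ of \eqref{chan1} becomes the Lambert sum $-\sum_{n\geq 1}q^{16n}/(1-q^{16n})^2$; the $\mathcal{S}_2$ piece produces the combined sum $\tfrac12\sum_n\bigl(q^{16n+12}/(1-q^{16n+12})^2-q^{16n+22}/(1+q^{16n+22})^2\bigr)$; and the remaining Appell--Lerch sum $\sum(-1)^nq^{24n^2+\cdots}/(1+q^{16n+22})^2$ is the third of the three sums on the RHS of \eqref{chan1}, after the relevant index shift in $n$. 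The prefactor $1/[q^{12},-q^{22};q^{16}]_\infty$ would then be rewritten using \eqref{j1} into the compact form $[-q^2;q^{16}]_\infty J_{16}^2/[-q^6,q^8;q^{16}]_\infty$ visible on each right-hand side.

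For the second family I would apply \eqref{chan0} with $q\mapsto q^{16}$ and parameters such as $(b_1,b_2,b_3)=(-q^{12},q^{22},b_3)$, with $b_3$ chosen so that the third Appell--Lerch sum on the RHS of \eqref{chan0} contributes precisely the term with prefactor $[-q^2,q^4;q^{16}]_\infty J_{16}^2/[q^6,-q^8,-q^4;q^{16}]_\infty$ appearing in \eqref{411}--\eqref{41010}. The closed forms \eqref{x4} and \eqref{x22} for $X(-q^{12};q^{16})$ and $X(q^{22};q^{16})$ would be used at the end to evaluate the numerical shifts $\tfrac12,\tfrac32,-\tfrac52,\tfrac12$ that sit inside the bracketed factors multiplying the theta products on these right-hand sides.

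The main obstacle is purely combinatorial bookkeeping: the eight identities differ only by index shifts in the Appell--Lerch sums, but each shift induces matching power-of-$q$ and $\pm$-sign changes in every prefactor, and one must check that every cross-term cancels correctly. I would therefore set up one symbolic master calculation per family, parametrized by $k$, carry out the reductions to \eqref{chan0} or \eqref{chan1} with $k$ left free, and only at the end substitute the four values $k\in\{1,2,9,10\}$ and apply \eqref{x4} and \eqref{x22}. A secondary difficulty is showing that the theta-product prefactors produced by the specializations of \eqref{chan0} and \eqref{chan1} collapse to the compact products on the right; this should follow from \eqref{j1} together with standard Jacobi triple-product manipulations, but the rewriting is delicate and will likely absorb most of the length of the proof.
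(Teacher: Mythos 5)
Your proposal assembles the right tools but omits the organizing step that makes the argument work: the dissection of each bilateral series according to the residue of the summation index modulo $4$. The left-hand side of \eqref{41} has exponents $\tfrac{n(3n+1)}{2}+n$ (quadratic leading coefficient $\tfrac32$) and poles at $q^{4n}$, whereas \eqref{chan0} and \eqref{chan1} with $q\mapsto q^{16}$ can only produce series with exponents $24n^2+\cdots$ and poles at $q^{16n+c}$; nothing in your outline bridges this mismatch. The bridge is to write $n=4m+r$, $r=0,1,2,3$, which splits the left side of \eqref{41} into $S_0-S_1+S_2-S_3$ with $S_r=\sum_m q^{24m^2+\cdots}/(1-q^{16m+4r})^2$. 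Only after this dissection do the quoted identities apply, and then each single identity of the first family requires \emph{both} of them: $S_1-S_2$ is evaluated by \eqref{chan0} with $(q,b_1,b_2,b_3)$ replaced by $(q^{16},q^{4},q^{8},-q^{22})$, while $S_0-S_3$ is evaluated by \eqref{chan1} with $(q,b_1,b_2)$ replaced by $(q^{16},q^{12},-q^{22})$. Your plan assigns \eqref{chan1} alone to the first family and \eqref{chan0} alone to the second, which leaves the $S_1$ and $S_2$ pieces unaccounted for; the term on the right of \eqref{41} carrying $X\left(q^{4};q^{16}\right)$, namely $-q^{3}[-q^{2};q^{16}]_\infty J_{16}^{2}\left[X\left(q^{4};q^{16}\right)+X\left(-q^{22};q^{16}\right)\right]/[-q^{6},q^{8};q^{16}]_\infty$, is precisely the footprint of the \eqref{chan0} application that your plan never makes.

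Two further points. The reduction of $\mathcal{S}_{2}(q^{12},-q^{22};q^{16})$ to $X\left(q^{12};q^{16}\right)+X\left(-q^{22};q^{16}\right)$ plus a complete bilateral sum is a separate regrouping computation that produces the third and fourth displayed lines on the right of \eqref{41}; your outline treats the $\mathcal{S}_2$ contribution as if it already had that shape. And the evaluations \eqref{x4} and \eqref{x22} are not needed anywhere in this lemma: the constants $-\tfrac12,\tfrac32,-\tfrac52,\tfrac12$ in \eqref{411}--\eqref{41010} fall out of the bookkeeping in \eqref{chan0} and \eqref{chan1} themselves, and \eqref{x4}, \eqref{x22} are only invoked later, in the $2$-dissection of $L(q)$ in the proof of Theorem \ref{thrank}. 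Your correct identifications of $(b_1,b_2)=(q^{12},-q^{22})$, of the Lambert sum, and of the role of \eqref{j1} show you are close, but without the mod-$4$ splitting and the paired use of \eqref{chan0} and \eqref{chan1} the derivation cannot be carried out.
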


 \begin{proof}
    Split the series according to the summation index $n$ modulo $4$ to obtain
    \begin{align}
        \sum_{\mbox{\tiny$\begin{array}{c} n=-\infty\\ n\neq0\end{array}$}}^\infty\frac{(-1)^{n}q^{\frac{n(3n+1)}{2}+n}}{(1-q^{4n})^2}&=
        \sum_{\mbox{\tiny$\begin{array}{c} n=-\infty\\ n\neq0\end{array}$}}^\infty\frac{q^{24n^2+6n}}{(1-q^{16n})^2}
        -   \sum_{\mbox{\tiny$\begin{array}{c} n=-\infty\end{array}$}}^\infty\frac{q^{24n^2+18n+3}}{(1-q^{16n+4})^2}  \nonumber\\&\quad+
        \sum_{\mbox{\tiny$\begin{array}{c} n=-\infty\end{array}$}}^\infty\frac{q^{24n^2+30n+9}}{(1-q^{16n+8})^2}-
        \sum_{\mbox{\tiny$\begin{array}{c} n=-\infty\end{array}$}}^\infty\frac{q^{24n^2+42n+18}}{(1-q^{16n+12})^2}
        \nonumber\\&=:S_0-S_1+S_2 -S_3  .\label{s012}
    \end{align}
    Applying \eqref{chan0} with $(q, b_1, b_2,b_3)$ replaced by $(q^{16},q^{4},q^{8},-q^{22})$, multiplying by $$\frac{[q^{4},-q^{18};q^{16}]_\infty}{q}$$ on both sides of the resulting equation and simplifying yields
    \begin{align}\label{s12}
        S_1-S_2 &=\frac{q^3[-q^{2};q^{16}]_\infty(q^{16};q^{16})^2_\infty}{[-q^{6},q^8;q^{16}]_\infty}\times\left[X\left(q^4;q^{16}\right)+X\left(-q^{22};q^{16}\right)\right]\nonumber\\&\quad+\frac{[q^{4};q^{16}]_\infty}{[-q^{2};q^{16}]_\infty}    \sum_{\mbox{\tiny$\begin{array}{c} n=-\infty\end{array}$}}^\infty\frac{(-1)^nq^{24n^2+72n+53}}{(1+q^{16n+22})^2}.
    \end{align}
    Similarly, we apply \eqref{chan1} with $(q, b_1, b_2)$ replaced by $(q^{16},q^{12},-q^{22})$, multiply by $$[q^{12},-q^{22};q^{16}]_\infty$$ on both sides of the resulting equation and simplify to obtain
    \begin{align}\label{s03}
        S_0-S_3 &=\frac{\mathcal{S}_1(q^{12},-q^{22};q^{16})\left[2-\mathcal{S}_1(q^{12},-q^{22};q^{16})\right]-\mathcal{S}_2(q^{12},-q^{22};q^{16})}{2}
        \nonumber\\&\quad-\sum_{n=1}^\infty\frac{q^{16n}}{(1-q^{16n})^2}+\frac{[q^{4};q^{16}]_\infty}{[-q^{6};q^{16}]_\infty}   \sum_{\mbox{\tiny$\begin{array}{c} n=-\infty\end{array}$}}^\infty\frac{(-1)^nq^{24n^2+72n+54}}{(1+q^{16n+22})^2}.
    \end{align}
    By \eqref{defs2}, we have
    \begin{align}
        &\mathcal{S}_2(q^{12},-q^{22};q^{16})
        \nonumber\\&=\sum_{n=0}^\infty
        \bigg(\frac{2q^{16n+12}-q^{32n+24}}{(1-q^{16n+12})^2}
        +\frac{q^{32n+8}}{(1-q^{16n+4})^2}-\frac{2q^{16n+22}-q^{32n+44}}{(1+q^{16n+22})^2}+\frac{q^{32n-12}}{(1+q^{16n-6})^2}\bigg).    \nonumber
    \end{align}
    Note that
    \begin{align}
        &   \sum_{n=0}^\infty
        \bigg(\frac{2q^{16n+12}-q^{32n+24}}{(1-q^{16n+12})^2}
        +\frac{q^{32n+8}}{(1-q^{16n+4})^2}\bigg)
        \nonumber\\&=   \sum_{n=0}^\infty
        \bigg(\frac{q^{16n+12}}{(1-q^{16n+12})^2}+\frac{q^{16n+12}-q^{32n+24}}{(1-q^{16n+12})^2}
        +\frac{q^{32n+8}-q^{16n+4}}{(1-q^{16n+4})^2}    +\frac{q^{16n+4}}{(1-q^{16n+4})^2}\bigg)
        \nonumber\\&=   \sum_{n=-\infty}^\infty
        \frac{q^{16n+12}}{(1-q^{16n+12})^2}+X\left(q^{12};q^{16}\right).    \nonumber
    \end{align}
    With a similar argument, one can verify that
    \begin{align}
        &\sum_{n=0}^\infty
        \bigg(\frac{q^{32n-12}}{(1+q^{16n-6})^2}-\frac{2q^{16n+22}-q^{32n+44}}{(1+q^{16n+22})^2}\bigg)
        \nonumber\\&=X\left(-q^{22};q^{16}\right)   -\sum_{n=-\infty}^\infty
        \frac{q^{16n+22}}{(1+q^{16n+22})^2}.\nonumber
    \end{align}
    Then
    \begin{align}
        &\mathcal{S}_2(q^{12},-q^{22};q^{16})
        \nonumber\\&=X\left(-q^{22};q^{16}\right)+X\left(q^{12};q^{16}\right)   +\sum_{n=-\infty}^\infty\left(  \frac{q^{16n+12}}{(1-q^{16n+12})^2}-
        \frac{q^{16n+22}}{(1+q^{16n+22})^2}\right).\label{ss2}
    \end{align}
    Substituting \eqref{ss2} into \eqref{s03}, invoking \eqref{defs1} and simplifying gives
    \begin{align}\label{ss03}
        S_0-S_3 &=\frac{[q^{4};q^{16}]_\infty}{[-q^{6};q^{16}]_\infty}  \sum_{\mbox{\tiny$\begin{array}{c} n=-\infty\end{array}$}}^\infty\frac{(-1)^nq^{24n^2+72n+54}}{(1+q^{16n+22})^2}-\sum_{n=1}^\infty\frac{q^{16n}}{(1-q^{16n})^2}
        \nonumber\\&\quad+\frac{\left[X\left(q^{12};q^{16}\right)+X\left(-q^{22};q^{16}\right)\right]
            \times\left[1-X\left(q^{12};q^{16}\right)-X\left(-q^{22};q^{16}\right)\right]}{2}\nonumber\\&\quad-\sum_{n=-\infty}^\infty\left(    \frac{q^{16n+12}}{2(1-q^{16n+12})^2}-
        \frac{q^{16n+22}}{2(1+q^{16n+22})^2}\right).
    \end{align}
    Substitute \eqref{s12} and \eqref{ss03} into \eqref{s012} and rearrange to obtain
    \begin{align}
        &   \sum_{\mbox{\tiny$\begin{array}{c} n=-\infty\\ n\neq0\end{array}$}}^\infty\frac{(-1)^{n}q^{\frac{n(3n+1)}{2}+n}}{(1-q^{4n})^2}\nonumber\\&=
        \left(  \frac{q[q^{4};q^{16}]_\infty}{[-q^{6};q^{16}]_\infty}-\frac{[q^{4};q^{16}]_\infty}{[-q^{2};q^{16}]_\infty}\right)   \sum_{\mbox{\tiny$\begin{array}{c} n=-\infty\end{array}$}}^\infty\frac{(-1)^nq^{24n^2+72n+53}}{(1+q^{16n+22})^2}
        \nonumber\\&\quad+\frac{\left[X\left(q^{12};q^{16}\right)+X\left(-q^{22};q^{16}\right)\right]
            \times\left[1-X\left(q^{12};q^{16}\right)-X\left(-q^{22};q^{16}\right)\right]}{2}\nonumber\\&\quad-\sum_{n=-\infty}^\infty\left(    \frac{q^{16n+12}}{2(1-q^{16n+12})^2}-
        \frac{q^{16n+22}}{2(1+q^{16n+22})^2}\right)-\sum_{n=1}^\infty\frac{q^{16n}}{(1-q^{16n})^2}\nonumber\\&\quad-
        \frac{q^3[-q^{2};q^{16}]_\infty(q^{16};q^{16})^2_\infty}{[-q^{6},q^8;q^{16}]_\infty}\times\left[X\left(q^4;q^{16}\right)+X\left(-q^{22};q^{16}\right)\right].\nonumber
    \end{align}
    This together with \eqref{j1} implies \eqref{41}.

    Proceeding with the same steps as
     in the foregoing proof, we can
     get \eqref{42}--\eqref{41010}.
 \end{proof}

 We are now in a position to prove Theorem \ref{thrank}.
 \begin{proof}[Proof of Theorem \ref{thrank}]
    Lemma 2.1 of \cite{mao-2} gives that, for $1\leq b\leq k-1$,
    \begin{align}
        &\sum_{n=0}^{\infty}\left(NT(b,k,n)-NT(k-b,k,n)\right)q^n
        \nonumber\\&=\frac{k}{J_1}\sum_{\mbox{\tiny$\begin{array}{c} n=-\infty\\ n\neq0\end{array}$}}^\infty\frac{(-1)^{n}q^{\frac{n(3 n+1)}{2}+(b-1)n}(1-q^n) }{(1-q^{kn})^2}\nonumber\\&\quad-
        \frac{k-b}{J_1}\sum_{\mbox{\tiny$\begin{array}{c} n=-\infty\\ n\neq0\end{array}$}}^\infty\frac{(-1)^{n}q^{\frac{n(3 n+1)}{2}+(b-1)n}(1-q^n)}{1-q^{kn}}\label{gen7121}
        .
    \end{align}
    Setting $(b,k)=(2,8)$ in \eqref{gen7121}, one obtain
    \begin{align}
        &\sum_{n=0}^{\infty}\left(NT(2,8,n)
        -NT(6,8,n)\right)q^n
        \nonumber\\[6pt]&=
        \frac{8}{J_1}\sum_{\mbox{\tiny$
                \begin{array}{c} n=-\infty\\ n\neq0\end{array}$}}^\infty\frac{(-1)^{n}q^{\frac{n(3n+1)}{2}+n}(1-q^n) }{(1-q^{8n})^2}-
        \frac{6}{J_1}\sum_{\mbox{\tiny$
                \begin{array}{c} n=-\infty\\ n\neq0\end{array}$}}^\infty\frac{(-1)^{n}q^{\frac{n( 3n+1)}{2}+n}(1-q^n)}{1-q^{8n}}
        \nonumber\\[6pt]&=
        \frac{1}{J_1}\sum_{\mbox{\tiny$
                \begin{array}{c} n=-\infty\\ n\neq0\end{array}$}}^\infty\frac{(-1)^{n}q^{\frac{n(3n+1)}{2}+n}(1-q^n)\left\{8-6(1-q^{8n})\right\} }{(1-q^{8n})^2}.\nonumber
    \end{align}
    Invoking  $$\frac{4 }{(1-q^{8n})^2}=\frac{2-q^{4n} }{(1-q^{4n})^2}+\frac{2+q^{4n} }{(1+q^{4n})^2}$$
    and simplifying, we find that
    \begin{align}
        &\sum_{n=0}^{\infty}\left(NT(2,8,n)
        -NT(6,8,n)\right)q^n
        \nonumber\\[6pt]&=
        \frac{1}{2J_1}\sum_{\mbox{\tiny$
                \begin{array}{c} n=-\infty\\ n\neq0\end{array}$}}^\infty\frac{(-1)^{n}q^{\frac{n(3n+1)}{2}}\left\{2q^n-2q^{2n}-7q^{5n}+7q^{6n}+3q^{9n}-3q^{10n}\right\} }{(1-q^{4n})^2}\nonumber\\&\quad+\frac{1}{2J_1}\sum_{\mbox{\tiny$
                \begin{array}{c} n=-\infty\\ n\neq0\end{array}$}}^\infty\frac{(-1)^{n}q^{\frac{n(3n+1)}{2}}\left\{2q^n-2q^{2n}-5q^{5n}+5q^{6n}-3q^{9n}+3q^{10n}\right\} }{(1+q^{4n})^2}\nonumber
        \nonumber\\[6pt]&=
        \frac{1}{2J_1}\sum_{\mbox{\tiny$
                \begin{array}{c} n=-\infty\\ n\neq0\end{array}$}}^\infty\frac{(-1)^{n}q^{\frac{n(3n+1)}{2}}\left\{9q^n-9q^{2n}+3q^{9n}-3q^{10n}\right\} }{(1-q^{4n})^2}\nonumber\\&\quad+\frac{1}{2J_1}\sum_{
            n=-\infty}^\infty\frac{(-1)^{n}q^{\frac{n(3n+1)}{2}}\left\{7q^n-7q^{2n}-3q^{9n}+3q^{10n}\right\} }{(1+q^{4n})^2},\label{11201930}
    \end{align}
    where the second equality follows from
    \begin{align}
        \sum_{\mbox{\tiny$
                \begin{array}{c} n=-\infty\\ n\neq0\end{array}$}}^\infty\frac{(-1)^{n}q^{\frac{n(3n+1)}{2}+an} }{(1\pm q^{4n})^2}=\sum_{\mbox{\tiny$
                \begin{array}{c} n=-\infty\\ n\neq0\end{array}$}}^\infty\frac{(-1)^{n}q^{\frac{n(3n+1)}{2}+(7-a)n} }{(1\pm q^{4n})^2}.\nonumber
    \end{align}

    Substituting \eqref{41}--\eqref{41010} into \eqref{11201930} and simplifying, we arrive at
    \begin{align}
        &\sum_{n=0}^{\infty}\left(NT(2,8,n)
        -NT(6,8,n)\right)q^n
        \nonumber\\[6pt]&=
        -\frac{9}{2J_{16}}  \sum_{\mbox{\tiny$\begin{array}{c} n=-\infty\end{array}$}}^\infty\frac{(-1)^nq^{24n^2+72n+53}}{(1+q^{16n+22})^2}
        -
        \frac{9}{2J_{16}}   \sum_{\mbox{\tiny$\begin{array}{c} n=-\infty\end{array}$}}^\infty\frac{(-1)^nq^{24n^2+88n+75}}{(1+q^{16n+22})^2}
        \nonumber\\&-
        \frac{3}{2J_{16}}   \sum_{\mbox{\tiny$\begin{array}{c} n=-\infty\end{array}$}}^\infty\frac{(-1)^nq^{24n^2+104n+97}}{(1+q^{16n+22})^2}
        -
        \frac{3}{2J_{16}}   \sum_{\mbox{\tiny$\begin{array}{c} n=-\infty\end{array}$}}^\infty\frac{(-1)^nq^{24n^2+56n+31}}{(1+q^{16n+22})^2}
        \nonumber\\&
        -\frac{7}{2J_{16}}  \sum_{\mbox{\tiny$\begin{array}{c} n=-\infty\end{array}$}}^\infty\frac{(-1)^nq^{24n^2+72n+53}}{(1-q^{16n+22})^2}
        -
        \frac{7}{2J_{16}}   \sum_{\mbox{\tiny$\begin{array}{c} n=-\infty\end{array}$}}^\infty\frac{(-1)^nq^{24n^2+40n+15}}{(1-q^{16n+10})^2}
        \nonumber   \\& +
        \frac{3}{2J_{16}}   \sum_{\mbox{\tiny$\begin{array}{c} n=-\infty\end{array}$}}^\infty\frac{(-1)^nq^{24n^2+8n-15}}{(1-q^{16n-10})^2}
        -
        \frac{3}{2J_{16}}   \sum_{\mbox{\tiny$\begin{array}{c} n=-\infty\end{array}$}}^\infty\frac{(-1)^nq^{24n^2+24n-13}}{(1-q^{16n-6})^2}+L(q),\label{plq}
        \intertext{where}
        L(q):&=\Bigg\{  2X\left(-q^{12};q^{16}\right)\times\left(\frac{[-q^{6};q^{16}]_\infty }{[-1;q^{16}]_\infty}+\frac{q^{3}[-q^{2};q^{16}]_\infty }{[-q^{8};q^{16}]_\infty}\right)  \nonumber   \\&\qquad+
        2X\left(q^{22};q^{16}\right)\times\left(\frac{[-q^{6};q^{16}]_\infty }{[-1;q^{16}]_\infty}-\frac{q^{3}[-q^{2};q^{16}]_\infty }{[-q^{8};q^{16}]_\infty}\right)   \nonumber   \\&\qquad-
        \left(\frac{2[-q^{6};q^{16}]_\infty }{[-1;q^{16}]_\infty}-\frac{q^{3}[-q^{2};q^{16}]_\infty }{[-q^{8};q^{16}]_\infty}\right)
        \Bigg\}\times\frac{2[q^{4};q^{16}]_\infty J_{16}^2}{[q^{6},-q^{4};q^{16}]_\infty J_1}.\nonumber
    \end{align}
    Note that none of the $q$-expansion of the series on the right side of \eqref{plq} (except $L(q)$) contains terms of the form $q^{2n}$.
    We only need to study the $2$-dissection for $L(q)$.
    Invoking \eqref{x4}, \eqref{x22}, \eqref{zp1} and collecting terms with even exponents, we prove \eqref{thn8}.
 \end{proof}

\section{Proof of Theorem \ref{thmain}}
We rewrite \eqref{v-1} and \eqref{v-2} as follows:
\begin{align}\label{v-11}
&   \sum_{n\geq 0} (M_{\omega}(1,4,4n) -M_{\omega}
    (3,4,4n))q^n
    =f_1(q)+f_2(q),
    \intertext{with}
    &   f_1(q):=\frac{1}{4J_1}A_0(q)B_0(q)
    +\frac{q}{4J_1}
    A_2(q)B_2(q),\nonumber\\[6pt]
&   f_2(q):=-\frac{1}{4J_1}A_0(q)B_0(q)\varphi(q)^2
    -\frac{q}{J_1}\bigg(
    \frac{1}{4}A_2(q)B_2(q)\varphi(q)^2
    +A_2(q)B_1(q)\psi(q)^2 \nonumber\\[6pt]
    &\qquad\qquad
    -(A_0(q)B_2(q)
    +A_2(q)B_0(q))\psi(q^2)^2 \bigg)
    -\frac{q^2}{J_1}A_0(q)
    B_3(q)\psi(q)^2\nonumber
\intertext{and}
\label{v-22}
    &\sum_{n\geq 0} (M_{\omega}(1,4,4n+2)-M_{\omega}
    (3,4,4n+2))q^n
    =f_3(q)+f_4(q)
    \intertext{with}
&   f_3(q)  =- \frac{1}{4J_1}(A_0(q)B_2(q)
    +A_2(q)B_0(q)),\nonumber\\
&       f_4(q)  =\frac{1}{4J_1}(A_0(q)B_2(q)
    +A_2(q)B_0(q))\varphi(q)^2
    +\frac{1}{J_1}A_0(q)(
    B_1(q)\psi(q)^2 -B_0(q)\psi(q^2)^2)
    \nonumber\\[6pt]
    &\qquad\quad -\frac{q}{J_1}A_2(q)B_2(q)\psi(q^2)^2
    +\frac{q^2}{J_1}A_2(q)B_3(q)\psi(q)^2.\nonumber
\end{align}
Applying \eqref{thn8} \eqref{v-11} and \eqref{v-22}, we find that Theorem \ref{thmain} is implied by
\begin{align}
    R_1(q)+R_2(q)=f_1(q^2)+f_2(q^2)-q(f_3(q^2)+f_4(q^2)).\nonumber
    \end{align}
Thus, it suffices to show that
\begin{align}
    R_1(q)=f_1(q^2)-qf_3(q^2),\label{rf1}\\
R_2(q)=f_2(q^2)-qf_4(q^2).\label{rf2}
\end{align}
Multiplying by $\frac
{J_{3,16}^3J_{4,16}J_{5,16}^{3}J_{8,16}^{13/2}}{q^6J_{16}^{29/2}}$ on both sides of
\eqref{rf1} and simplifying, we find that it is equivalent to
\begin{align}
&   \frac
    {J_{6,16}^{2}J_{8,16}^{8}}{4q^6J_{1,16}^2J_{3,16}^2J_{4,16}^2J_{5,16}^2J_{7,16}^2}
-   \frac
    {J_{2,16}^{2}J_{8,16}^{6}}{2q^4J_{1,16}^4J_{7,16}^4}
\nonumber\\&    -   \frac
    {J_{3,64}^3J_{5,64}^3J_{8,64}^{12}J_{11,64}^3J_{12,64}J_{13,64}^3
        J_{19,64}^3J_{20,64}^2J_{21,64}^3J_{24,64}^{12}J_{27,64}^3J_{29,64}^3}{4q^6J_{10,64}J_{16,64}J_{22,64}J_{64}^{48}}
\nonumber   \\& -   \frac
    {J_{3,64}^3J_{4,64}J_{5,64}^3J_{8,64}^{12}J_{10,64}J_{11,64}^3J_{12,64}J_{13,64}^3
        J_{19,64}^3J_{21,64}^3J_{22,64}J_{24,64}^{12}J_{27,64}^3J_{28,64}J_{29,64}^3}
    {4q^4J_{2,64}J_{14,64}J_{16,64}J_{18,64}J_{30,64}J_{64}^{48}}
    \nonumber   \\&
    -   \frac
    {J_{3,64}^3J_{5,64}^3J_{8,64}^{12}J_{11,64}^3J_{12,64}^2J_{13,64}^3
        J_{19,64}^3J_{20,64}J_{21,64}^3J_{24,64}^{12}J_{27,64}^3J_{29,64}^3}
    {4q^5J_{6,64}J_{16,64}J_{26,64}J_{64}^{48}}
    \nonumber   \\& -   \frac
    {J_{3,64}^3J_{4,64}J_{5,64}^3J_{6,64}J_{8,64}^{12}J_{11,64}^3J_{13,64}^3
        J_{19,64}^3J_{20,64}J_{21,64}^3J_{24,64}^{12}J_{26,64}J_{27,64}^3J_{28,64}J_{29,64}^3}
    {4q^5J_{2,64}J_{14,64}J_{16,64}J_{18,64}J_{30,64}J_{64}^{48}}=0\label{rf11}.
\end{align}
Using \cite[Theorem 3]{sinai}, we verify that
each term on the left side of \eqref{rf11} is a modular function on
$\Gamma_{1}(64)$. Then we can prove \eqref{rf11} with
the MAPLE package {\it thetaids} \cite{maple}. For the Maple commands,
 see https://github.com/dongpanghu/Code2/blob/main/code.md.
 This proves \eqref{rf1}.
With a completely similar argument, one can obtain \eqref{rf2} and the detailed proof is omitted.
Then the proof of Theorem \ref{thmain} is completed.

 \vspace{0.5cm}

\noindent{\bf Acknowledgments.}
 This work was partially
supported by National Natural Science Foundation of
  China (12071331,
 11971341 and  11971203) and
     the Natural Science Foundation of
   Jiangsu Province of China (BK20221383).

\end{document}